\documentclass{amsart}

\usepackage{amsfonts, amsmath, amscd}
\usepackage[psamsfonts]{amssymb}

\usepackage{amssymb}

\newtheorem{theorem}{Theorem}
\newtheorem{lemma}[theorem]{Lemma}

\theoremstyle{definition}

\theoremstyle{remark}

\numberwithin{equation}{section}

\begin{document}

\title{Reflexive group topologies on Abelian groups}

\author{S.S. Gabriyelyan }

\address{S.S. Gabriyelyan, Department of Mathematics, Ben-Gurion University of the
Negev, Beer-Sheva, P.O. 653, Israel}
\email{saak@math.bgu.ac.il}

\thanks{The author was partially supported
 by Israel Ministry of Immigrant Absorption.}

\subjclass[2000]{Primary 22A10, 43A40; Secondary 54H11}

\date{may 2009}

\keywords{Characterized group, $T$-sequence, dual group, Polish group, reflexive group}

\begin{abstract}
It is proved that any infinite Abelian group  of infinite exponent admits a non-discrete reflexive group topology.
\end{abstract}

\maketitle

\section*{Introduction}

For a topological group $G$, the group $G^{\wedge}$ of continuous homomorphisms (characters) into the torus $\mathbb{T} =\{ z\in \mathbb{C} : \; |z|=1\}$ endowed with the compact-open topology is called  the {\it character group} of $G$ and $G$ is named {\it Pontryagin reflexive} or {\it reflexive} if the canonical homomorphism $\alpha_G : G\to G^{\wedge\wedge} , g\mapsto (\chi\mapsto (\chi, g))$ is a topological isomorphism. In the article we consider the following question.

\[
\begin{split}
\mbox{ {\bf Problem 1.}} &  \mbox{ {\it Is any infinite Abelian group admits a non-discrete reflexive }} \\
  & \mbox{ {\it group topology}?}
\end{split}
\]

A group $G$ with the discrete topology is denoted by $G_d$. The exponent of $G$ (=the least common multiple of the orders of the elements of $G$) is denoted by $\exp G$. The subgroup of $G$ which generated by an element $g$ is denoted by $\langle g\rangle$.

Following E.G.Zelenyuk and I.V.Protasov \cite{ZP1}, we say that a sequence $\mathbf{u} =\{ u_n \}$ in a group $G$ is a $T$-{\it sequence} if there is a Hausdorff group topology on $G$ for which $u_n $ converges to zero. The group $G$ equipped with the finest group topology with this property is denoted by $(G, \mathbf{u})$. Using the method of $T$-sequences, they proved that every infinite Abelian group admits a complete group topology for which characters do not separate points. Using this method,
we give the positive answer to Problem 1 for groups of infinite exponent. We prove the following theorem.

\begin{theorem} \label{t1}
{\it Any infinite Abelian group $G$ such that $\exp G=\infty$ admits a non-discrete  reflexive group topology.}
\end{theorem}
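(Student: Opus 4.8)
The plan is to build a non-discrete reflexive topology on $G$ by choosing a suitable $T$-sequence $\mathbf{u}=\{u_n\}$ and studying the group $(G,\mathbf{u})$ equipped with the finest group topology making $u_n\to 0$. Since $\exp G=\infty$, the group $G$ contains elements of arbitrarily large order (or an element of infinite order), and I would first exploit this to find a countable subgroup on which I can control the dual. The idea is that a carefully engineered $T$-sequence, living inside a countable subgroup generated by elements of unboundedly growing orders, yields a Polish (completely metrizable, separable) group topology on that countable piece, while the rest of $G$ is handled by a direct-sum or product decomposition.

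First I would reduce to the countable case. Write $G$ as containing a countable subgroup $H$ of infinite exponent, and express the topology on $G$ so that $G$ becomes (topologically) a product or extension of $(H,\mathbf{u})$ by a discrete complement; reflexivity is preserved under finite products and, with care, under the relevant direct-sum constructions, so it suffices to make $(H,\mathbf{u})$ reflexive and non-discrete. Second, on the countable group $H$ I would select the $T$-sequence $\mathbf{u}$ so that $(H,\mathbf{u})$ is metrizable and complete — hence Polish — because for Polish groups one has strong duality tools: the evaluation map $\alpha_H$ is continuous and injective once characters separate points, and the hard direction (that $\alpha_H$ is open onto its image and surjective) can be attacked via completeness and the description of the dual $(H,\mathbf{u})^{\wedge}$ as the group of characters sending $u_n\to 1$.

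The central computation is to identify the dual group $(H,\mathbf{u})^{\wedge}$ explicitly and then its bidual. Here I would use the characterization that a character $\chi$ is continuous on $(H,\mathbf{u})$ if and only if $\chi(u_n)\to 1$ in $\mathbb{T}$; choosing the $u_n$ with rapidly increasing orders makes this convergence condition pin down the dual as a concrete, manageable object (typically a countable or compactly generated group). Then I would compute $(H,\mathbf{u})^{\wedge\wedge}$ and verify that $\alpha_H$ is a topological isomorphism: injectivity and density of the image from separation of points, continuity from the definition of $\alpha_H$, and openness from a completeness/Baire-category argument on the Polish group. Ensuring non-discreteness is automatic provided $\mathbf{u}$ is genuinely non-trivial (the $u_n$ are distinct and do not eventually vanish), so that $u_n\to 0$ forces the topology to be strictly coarser than the discrete one.

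The main obstacle I expect is the openness of $\alpha_H$, equivalently proving that $(H,\mathbf{u})$ is reflexive rather than merely having a separating dual. This requires showing that the compact-open topology on the bidual matches the original topology, which in turn demands a precise understanding of the compact subsets of $(H,\mathbf{u})^{\wedge}$ and a verification that the $T$-sequence topology is recovered by evaluation against those compacta. The delicate engineering of the $T$-sequence — balancing the growth of the orders of $u_n$ so that the dual is neither too large (destroying metrizability of the bidual) nor too small (destroying separation of points) — is where the real work lies, and I would lean on the $T$-sequence machinery of Zelenyuk and Protasov together with completeness to push the argument through.
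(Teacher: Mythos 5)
There is a genuine gap, and it sits at the very center of your plan: you propose to choose the $T$-sequence $\mathbf{u}$ on a \emph{countable} subgroup $H$ so that $(H,\mathbf{u})$ itself is ``metrizable and complete --- hence Polish'' and non-discrete. No such topology exists on any countable group. A countable completely metrizable group is a Baire space which is a countable union of singletons, so some singleton has nonempty interior, and by homogeneity the group is discrete. The obstruction is even more intrinsic for $T$-sequence topologies: by Zelenyuk--Protasov, $(H,\mathbf{u})$ is always complete, so for countable $H$ it can be metrizable only by being discrete; when non-discrete it is sequential and not first countable. Consequently every tool you intend to use on $(H,\mathbf{u})$ --- Baire category for openness of $\alpha_H$, metrizability for continuity of $\alpha_H$ --- has nothing to act on, and no ``careful engineering'' of $\mathbf{u}$ can repair this. (A secondary gap: your reduction asks for a direct product decomposition of $G$ over $H$ with a discrete complement; such complements need not exist. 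The correct reduction extends the topology of $H$ to $G$ so that $H$ becomes an \emph{open} subgroup and invokes \cite{BCM}.)

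What the paper does instead --- and this is the idea your proposal is missing --- is to put the Polish structure on the \emph{other side} of the duality. After reducing to the three concrete cases $\mathbb{Z}$, $\mathbb{Z}(p^{\infty})$ and $\bigoplus_n \mathbb{Z}(b_n)$ with $b_n\to\infty$, it chooses an explicit $T$-sequence $\mathbf{u}$ in $H$ and considers $(H,\mathbf{u})^{\wedge}=G_{\mathbf{u}}$, which is the characterized subgroup $s_{\mathbf{u}}(X)$ of the \emph{compact metrizable} group $X=H_d^{\wedge}$; this uncountable group is Polish in the metric $\rho$ of (\ref{01}). The reflexive topology ultimately placed on $H$ is not the $T$-sequence topology at all, but the compact-open topology $H$ carries as the dual of the Polish group $G_{\mathbf{u}}$: the hard, concrete work of the paper (Lemma \ref{l13} in the $\mathbb{Z}(p^{\infty})$ case, and the analogous computation for $\bigoplus_n \mathbb{Z}(b_n)$) is to show that $G_{\mathbf{u}}^{\wedge}=H$ \emph{algebraically}, after which Proposition 1 of \cite{Ga1} gives reflexivity of $G_{\mathbf{u}}$, hence of its dual $H$ with the dual topology. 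Reflexivity of $(H,\mathbf{u})$ itself is neither claimed nor needed. Your proposal conflates these two topologies, and in doing so places the Polishness exactly where it cannot exist.
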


Let $G$ be a Borel subgroup of a Polish group $X$. $G$ is called polishable if there exists a Polish group topology $\tau$ on $G$ such that the identity map $i : (G, \tau ) \to X, i(g)=g,$ is continuous.
A $\delta$-neighborhood of zero in a Polish group is denoted by $U_\delta$.

Let $X$ be a compact metrizable group and $\mathbf{u} =\{ u_n \}$ a sequence of elements of $X^\wedge$. We denote by $s_{\mathbf{u}} (X)$ the set of all $x\in X$ such that $(u_n , x)\to 1$. Let $G$ be a subgroup of $X$. If $G=s_{\mathbf{u}} (X)$ we say that $\mathbf{u}$ {\it characterizes} $G$ and that $G$ is {\it characterized} (by $\mathbf{u}$). By Theorem 1 \cite{Ga1}, if $G$ is characterized, then it is polishable by the following metric
\begin{equation} \label{01}
\rho (x,y) = d(x,y) + \sup \{ |(u_n, x) -(u_n, y)|, \; n\in \mathbb{N} \},
\end{equation}
where $d$ is the initial metric on $X$. $G$ with the metric $\rho$ is denoted by $G_\mathbf{u}$.

The integral part of a real number $x$ is denoted by $[x]$. By $\| x\|$ we denote the distance of a real number $x$ to the nearest integer. We also use the following inequality $\pi |\varphi | \leqslant | 1- e^{2\pi i\varphi} |\leqslant  2\pi |\varphi | , \varphi\in [-\frac{1}{2} ; \frac{1}{2} )$.

\section{The Proof}

Let $G$ be an infinite Abelian group and $H$ be its infinite subgroup. If $H$ admits a non-trivial reflexive group topology $\tau$, then we can extend $\tau$ to  $G$ such that $H$ will be an open subgroup. Then, by \cite{BCM}, $G$ also will be reflexive. It is well known that any Abelian group $G$ of infinite exponent contains at least one of the following groups:
\begin{enumerate}
\item $\mathbb{Z}$;
\item $\mathbb{Z} (p^{\infty})$ for some prime number $p$;
\item $G=\bigoplus_n \mathbb{Z} (b_n)$, where $b_1 \leqslant b_2 \leqslant\dots ,\; b_n \to\infty$.
\end{enumerate}
Thus it is enough to prove Theorem \ref{t1} for these three cases only.

\subsection{The case $\mathbb{Z}$}

A non-trivial reflexive group topology on $\mathbb{Z}$ is constructed in Theorem 2 \cite{Gab}.

\subsection{The case $\mathbb{Z} (p^{\infty})$}

\begin{proof}  Set
\[
\mathbf{u}=\{ u_k\},\; u_k =\frac{1}{p^{n_k +1}}, \mbox{ where } n_1<n_2<\dots \mbox{ and } n_{k+1} -n_k \to\infty.
\]
By (25.2) \cite{HR}, if $\omega=(a_n) \in \Delta_p =\mathbb{Z} (p^{\infty})^\wedge$, where $0\leqslant a_n <p$, then
\begin{equation} \label{1}
(u_k,\omega) =\exp \left\{ \frac{2\pi i}{p^{n_k +1}} \left( a_0 + pa_1 +\dots + p^{n_k} a_{n_k} \right)\right\} .
\end{equation}
By (10.4) \cite{HR}, if $\omega_1 \not= \omega_2 \in\Delta_p$, then
$d(\omega_1 ,\omega_2) = 2^{-n}$, where  $n$ is the minimal index such that $a_n^1 \not= a_n^2$.
For any $\omega=(a_n) \in \Delta_p$ and $k>1$, we put
\[
m_k =m_k (\omega) =\max \{ d_k, n_{k-1} \}, \mbox{ where } d_k  = n_k \mbox{ if } 0< a_{n_k} <p-1,
\]
\[
 \begin{split}
\mbox{ or } d_k =\min \{ j  : & \mbox{ either } a_s =0 \mbox{ for every } j<s\leqslant n_k, \\
& \mbox{or } a_s =p-1 \mbox{ for every } j<s\leqslant n_k \}.
\end{split}
\]
Then $n_{k-1}\leqslant m_k\leqslant n_k$ and the equality $m_k =n_k$ is possible only if $p\not= 2$.

Set $\omega_0 = (1,0,0,\dots)\in \Delta_p$. It is clear, by (\ref{1}), that $\omega_0 \in s_{\mathbf{u}} (\Delta_p )$. Since $\langle\omega_0 \rangle$ is dense in $\Delta_p$, then, by Theorem 3 \cite{Ga1}, $\mathbf{u}$ is a $T$-sequence and $(\mathbb{Z} (p^{\infty}), \mathbf{u})^\wedge = G_\mathbf{u}$, where the Polish group $G_\mathbf{u}$ is $s_{\mathbf{u}} (\Delta_p )$ with the Polish group metric $\rho$ [see (\ref{01})].

We need the following three lemmas.
\begin{lemma} \label{l11}
$\omega \in s_{\mathbf{u}} (\Delta_p ) \Leftrightarrow n_k -m_k \to\infty.$
\end{lemma}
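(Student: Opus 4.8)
The plan is to convert the convergence $(u_k,\omega)\to 1$ into a statement about the distance to the nearest integer of the number appearing in the exponent of (\ref{1}), and then to read off that distance from the base-$p$ digits $(a_n)$ of $\omega$. Setting
\[
x_k=\frac{1}{p^{n_k+1}}\bigl(a_0+pa_1+\dots+p^{n_k}a_{n_k}\bigr)=\sum_{j=0}^{n_k}a_j\,p^{\,j-n_k-1},
\]
equation (\ref{1}) reads $(u_k,\omega)=e^{2\pi i x_k}$, and the inequality $\pi|\varphi|\leqslant|1-e^{2\pi i\varphi}|\leqslant 2\pi|\varphi|$ (applied to the fractional representative of $x_k$) gives $\pi\|x_k\|\leqslant |1-(u_k,\omega)|\leqslant 2\pi\|x_k\|$. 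Hence $\omega\in s_{\mathbf u}(\Delta_p)$ iff $\|x_k\|\to 0$, and the whole lemma reduces to proving $\|x_k\|\to 0\iff n_k-m_k\to\infty$.

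First I would discard the low-order digits. Splitting $x_k=A_k+B_k$, where $A_k=\sum_{j=n_{k-1}+1}^{n_k}a_j p^{\,j-n_k-1}$ collects the top $n_k-n_{k-1}$ digits and $B_k=\sum_{j=0}^{n_{k-1}}a_j p^{\,j-n_k-1}$ the rest, a crude geometric-sum bound gives $0\leqslant B_k<p^{\,n_{k-1}-n_k}$, which tends to $0$ because $n_k-n_{k-1}\to\infty$. Since $\|\cdot\|$ is $1$-Lipschitz, $\|x_k\|\to 0\iff\|A_k\|\to 0$. Now $A_k$ is exactly the finite base-$p$ fraction $0.a_{n_k}a_{n_k-1}\dots a_{n_{k-1}+1}\in[0,1)$, so the question becomes how small $\|A_k\|$ is in terms of the leading digits of $\omega$.

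The key step is to estimate $\|A_k\|$ from these leading digits. A short computation with geometric sums shows that $A_k$ is small precisely when the top digits vanish, while $1-A_k$ is small precisely when the top digits all equal $p-1$. Let $r_k$ be the length of the maximal run of $0$'s (respectively of $(p-1)$'s) starting at position $n_k$ and remaining inside the window $[n_{k-1}+1,n_k]$. Then one always has $\|A_k\|\leqslant p^{-r_k}$, and $\|A_k\|\geqslant p^{-r_k-1}$ whenever this run is interrupted strictly inside the window; the exceptional case $r_k=n_k-n_{k-1}$ forces the window size to be large and so is harmless for the equivalence. It follows that $\|A_k\|\to 0\iff r_k\to\infty$.

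It remains to identify $r_k$ with $n_k-m_k$. By definition $n_k-d_k$ is the length of the maximal run of $0$'s or of $(p-1)$'s starting at $n_k$ in the \emph{full} expansion, so $r_k=\min\{n_k-d_k,\,n_k-n_{k-1}\}$, according to whether the run is broken before leaving the window or not; since $m_k=\max\{d_k,n_{k-1}\}$ yields $n_k-m_k=\min\{n_k-d_k,\,n_k-n_{k-1}\}$, we get $r_k=n_k-m_k$, and the equivalence follows. The main obstacle I anticipate is bookkeeping rather than conceptual: one must handle the two regimes (a run of $0$'s, making $A_k$ small, versus a run of $(p-1)$'s, making $A_k$ close to $1$) uniformly, match the window-boundary case $d_k<n_{k-1}$ correctly against $m_k=\max\{d_k,n_{k-1}\}$, and keep the estimate for $\|A_k\|$ sharp enough to survive the perturbation by $B_k$. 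The degenerate case $p=2$, in which no digit can satisfy $0<a_{n_k}<p-1$, should also be checked directly against the definition of $d_k$.
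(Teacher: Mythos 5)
Your proposal is correct and takes essentially the same approach as the paper: both reduce $(u_k,\omega)\to 1$ to estimating the distance from the exponent in (\ref{1}) to the nearest integer, and both obtain the two-sided bound (roughly $p^{-r-1}\leqslant \|x_k\| \leqslant p^{-r}$ with $r=n_k-m_k$) by examining the terminal run of $0$'s or of $(p-1)$'s ending at position $n_k$, your three regimes corresponding exactly to the paper's cases (a), (b), (c). The differences are purely organizational: you first split off the digits below $n_{k-1}$ as a vanishing term $B_k$ and phrase both directions through $\|\cdot\|$ and the run length $r_k$, while the paper splits at $m_k$ itself, writes the exact formulas (\ref{2})--(\ref{4}) modulo $1$, and proves the forward direction by contradiction along subsequences.
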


\begin{proof} We can rewrite (\ref{1}) as follows:

(a) if $a_s =0$ for $m_k <s\leqslant n_k$, then $({\rm mod}\, 1)$
\begin{equation} \label{2}
\frac{1}{2\pi i} {\rm Arg} (u_k, \omega)= \frac{1}{p^{n_k -m_k +1}} \sum_{l=0}^{m_k} \frac{a_l}{p^{m_k -l}} ;
\end{equation}

(b) if $a_s =p-1$ for $m_k <s\leqslant n_k$, then $({\rm mod}\, 1)$
\[
\frac{1}{2\pi i} {\rm Arg} (u_k, \omega)= \frac{1}{p^{n_k -m_k +1}} \sum_{l=0}^{m_k} \frac{a_l}{p^{m_k -l}} + \sum_{l=m_k +1}^{n_k} \frac{p-1}{p^{n_k +1-l}} = ({\rm mod}\, 1)
\]
\begin{equation} \label{3}
= \frac{1}{p^{n_k -m_k +1}} \sum_{l=0}^{m_k} \frac{a_l}{p^{m_k -l}} -\frac{1}{p^{n_k -m_k }};
\end{equation}

(c) if $m_k =n_k$ (and, hence, $p\not= 2$),  then $({\rm mod}\, 1)$
\begin{equation} \label{4}
\frac{1}{2\pi i} {\rm Arg} (u_k, \omega)=  \sum_{l=0}^{n_k -1} \frac{a_l}{p^{n_k +1-l}} +\frac{a_{n_k}}{p}.
\end{equation}

Assume that $\omega\in s_{\mathbf{u}}(\Delta_p )$ and $n_k -m_k \not\to \infty$.

Let case (a) be fulfilled for $k_1<k_2< \dots$ and $n_{k_t} -m_{k_t} =r>0$. Then, by (\ref{2}), we have $({\rm mod}\, 1)$
\[
\frac{1}{p^{r+1}} \leqslant \frac{a_{m_{k_t}}}{p^{r+1}} \leqslant \frac{1}{2\pi i} {\rm Arg} (u_{k_t}, \omega) <\frac{1}{p^r},
\]
and, hence, $(u_{k_t}, \omega)  \not\to 1$. It is a contradiction.

Let case (b) be fulfilled for $k_1<k_2< \dots$ and $n_{k_t} -m_{k_t} =r>0$.
Since $a_{m_{k_t}} \leqslant p-2$, then
\[
\sum_{l=0}^{m_{k_t}} \frac{a_l}{p^{m_{k_t}-l}} < p-2 + \frac{1}{p} \sum_{l=0}^\infty \frac{p-1}{p^l} = p-2 +\frac{p-1}{p}\cdot \frac{p}{p-1}=p-1.
\]
Then, by (\ref{3}), we have $({\rm mod}\, 1)$
\[
-\frac{1}{p^{r}} \leqslant \frac{1}{2\pi i} {\rm Arg} (u_{k_t}, \omega) < \frac{1}{p^{r+1}} \cdot (p-1) -\frac{1}{p^r} = -\frac{1}{p^{r+1}},
\]
and, hence, $(u_{k_t}, \omega)  \not\to 1$. It is a contradiction.

Let case (c) be fulfilled for $k_1<k_2< \dots$ and $n_{k_t} =m_{k_t}$. Then $p>2$ and $0<a_{n_{k_t}} < p-1$. Thus, by (\ref{4}), we have $({\rm mod}\, 1)$
\[
\frac{1}{p} \leqslant\frac{a_{n_{k_t}}}{p} \leqslant \frac{1}{2\pi i} {\rm Arg} (u_{k_t}, \omega) <\frac{a_{n_{k_t}}}{p} + \frac{1}{p} \leqslant \frac{p-1}{p},
\]
and, hence, $(u_{k_t}, \omega)  \not\to 1$. It is a contradiction.

The converse assertion evidently follows from (\ref{2}) and (\ref{3}).
\end{proof}

\begin{lemma} \label{l12}
{\it   $\langle\omega_0 \rangle$ is dense in $G_{\mathbf{u}}$ and, hence, $G_\mathbf{u}$ is monothetic.}
\end{lemma}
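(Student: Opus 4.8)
The plan is to prove $\rho$-density of $\langle\omega_0\rangle$ directly: for an arbitrary $\omega=(a_n)\in G_{\mathbf u}=s_{\mathbf u}(\Delta_p)$ and an arbitrary $\varepsilon>0$ I would produce a single integer multiple of $\omega_0$ lying within $\rho$-distance $\varepsilon$ of $\omega$. Since $\langle\omega_0\rangle$ is by definition the monogenic (singly generated) subgroup, its $\rho$-density is exactly what it means for $G_{\mathbf u}$ to be monothetic, so the two assertions of the lemma collapse into one. The convenient reformulation is that under $\Delta_p=\mathbb{Z}_p$ the generator $\omega_0$ is the $p$-adic integer $1$, so $\langle\omega_0\rangle=\mathbb{Z}$, and for an ordinary integer $N$ formula (\ref{1}) specializes to $(u_k,N)=\exp\{2\pi i N/p^{n_k+1}\}$.

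The natural candidate is the truncation $N=\sum_{l=0}^{n_K}a_l p^l$, where $K$ is a large cut-off index to be fixed at the end. This $N$ is a nonnegative integer whose base-$p$ digits are $a_0,\dots,a_{n_K}$ followed by zeros, so it agrees with $\omega$ in every digit of index $\le n_K$. I would extract two immediate consequences: first $d(N,\omega)\le 2^{-(n_K+1)}$, which is small once $K$ is large; and second, for every $k\le K$ the value $(u_k,\cdot)$ depends only on digits of index $\le n_k\le n_K$, so $(u_k,N)=(u_k,\omega)$ holds \emph{exactly}, contributing nothing to the supremum term of $\rho$.

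It remains to bound $\sup_k|(u_k,N)-(u_k,\omega)|$, where only the indices $k>K$ can contribute. Here I would combine two estimates. On one hand, $\omega\in s_{\mathbf u}$ means $(u_k,\omega)\to 1$. On the other hand, since $0\le N<p^{n_K+1}$ we have $\|N/p^{n_k+1}\|<p^{-(n_k-n_K)}$, and the inequality $|1-e^{2\pi i\varphi}|\le 2\pi|\varphi|$ then yields $|(u_k,N)-1|<2\pi\,p^{-(n_k-n_K)}$. The triangle inequality $|(u_k,N)-(u_k,\omega)|\le|(u_k,N)-1|+|1-(u_k,\omega)|$ reduces the whole estimate to making each of these two pieces at most $\varepsilon/4$ simultaneously for all $k>K$.

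The step I expect to be the main obstacle is controlling the first piece \emph{uniformly} in $k>K$, rather than merely for $k\to\infty$: a priori the smallest index $k=K+1$ could have $n_{K+1}$ very close to $n_K$, making $p^{-(n_k-n_K)}$ large. This is precisely where the hypothesis $n_{k+1}-n_k\to\infty$ is essential. Because the sequence $\{n_k\}$ is sparse, for $K$ large all gaps beyond index $K$ exceed any prescribed $L$, so $n_k-n_K\ge n_{K+1}-n_K\ge L$ for \emph{every} $k>K$, removing the bad index. Concretely, I would fix $L$ with $2\pi\,p^{-L}<\varepsilon/4$, and then choose $K$ so large that simultaneously every gap beyond $K$ exceeds $L$, that $|(u_k,\omega)-1|<\varepsilon/4$ for all $k>K$, and that $2^{-(n_K+1)}<\varepsilon/2$. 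Assembling the three bounds gives $\rho(N\omega_0,\omega)=d(N,\omega)+\sup_k|(u_k,N)-(u_k,\omega)|<\varepsilon/2+\varepsilon/2=\varepsilon$, which completes the density argument and hence shows $G_{\mathbf u}$ is monothetic.
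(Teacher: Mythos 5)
Your proof is correct, but it is not the paper's argument: the two share only the outer skeleton (approximate $\omega$ by an integer truncation of its digit expansion) and differ in the key estimates. The paper truncates at the structure-dependent position $m_{k_0}$ supplied by Lemma \ref{l11} and bounds $|1-(u_k,\omega-\omega_0^q)|\leqslant 2\pi p^{-(n_k-m_k)}$ directly from formulas (\ref{2}) and (\ref{3}); the uniformity of this bound over all $k\geqslant k_0$ is exactly the content of Lemma \ref{l11} ($n_k-m_k\to\infty$ for $\omega\in s_{\mathbf{u}}(\Delta_p)$), and no triangle inequality through $1$ is needed. You instead truncate at $n_K$, split via $|(u_k,N)-(u_k,\omega)|\leqslant|(u_k,N)-1|+|1-(u_k,\omega)|$, dispose of the second term using only the bare definition of $s_{\mathbf{u}}(\Delta_p)$, and control the first term uniformly in $k>K$ by the sparseness hypothesis $n_{k+1}-n_k\to\infty$ — correctly identifying that the only dangerous index is $k=K+1$ and that monotonicity then gives $n_k-n_K\geqslant n_{K+1}-n_K\geqslant L$ for all $k>K$. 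Your individual estimates all check out: $(u_k,N)=(u_k,\omega)$ exactly for $k\leqslant K$ by (\ref{1}), $d(N,\omega)\leqslant 2^{-(n_K+1)}$, and $0\leqslant N/p^{n_k+1}<p^{-(n_k-n_K)}\leqslant 1/2$ for $k>K$, so the bound $|1-e^{2\pi i\varphi}|\leqslant 2\pi|\varphi|$ applies. The trade-off: your route makes Lemma \ref{l12} logically independent of Lemma \ref{l11} (a genuine simplification of the paper's dependency structure, since only the definition of $s_{\mathbf{u}}$ and the gap condition are used), at the price of invoking the sparseness of $\{n_k\}$ explicitly inside this lemma; the paper's route concentrates all use of sparseness inside Lemma \ref{l11} and in exchange gets a single clean bound $2\pi p^{-(n_k-m_k)}<0.7\varepsilon$ valid for every $k\geqslant k_0$ with no case distinction near the truncation index.
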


\begin{proof}
Let $\varepsilon>0$ and $\omega =(a_n)\in G_{\mathbf{u}}$. Choose $r$ such that $\frac{1}{p^r} <\frac{\varepsilon}{10}$. By Lemma \ref{l11}, we can choose $k_0$ such that $\frac{1}{2^{n_{k_0 -1}}} < \frac{\varepsilon}{10}$ and $n_k -m_k >r+1$ for every $k\geqslant  k_0$. Choose $q$ such that
\[
\omega - \omega_0^q = (0,\dots , 0_{m_{k_0}}, a_{m_{k_0} +1}, \dots).
\]
Thus, if $k<k_0$, then, $(u_k,\omega - \omega_0^q)=1$. If $k\geqslant  k_0$, then, by (\ref{2}) and (\ref{3}),
\[
| 1-(u_k,\omega - \omega_0^q)|\leqslant 2\pi \cdot \frac{1}{p^{n_k -m_k}} < 0.7 \varepsilon.
\]
So $\rho (\omega , \omega_0^q)<\varepsilon$ and $\langle\omega_0 \rangle$ is dense in $G_{\mathbf{u}}$. \end{proof}

\begin{lemma} \label{l13}
{ $G^\wedge_\mathbf{u} = \mathbb{Z} (p^\infty)$ algebraically.}
\end{lemma}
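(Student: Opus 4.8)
The plan is to show that evaluation at $\omega_0$ identifies $G_\mathbf{u}^\wedge$ with the group of $p$-power roots of unity in $\mathbb{T}$, and that this copy is precisely $\mathbb{Z}(p^\infty)$. First, by Lemma \ref{l12} the cyclic group $\langle\omega_0\rangle$ is dense in $G_\mathbf{u}$, so a continuous character $\chi$ is determined by the single value $\chi(\omega_0)$; hence $\chi\mapsto\chi(\omega_0)$ embeds $G_\mathbf{u}^\wedge$ into $\mathbb{T}$. To get the inclusion $\mathbb{Z}(p^\infty)\hookrightarrow G_\mathbf{u}^\wedge$, I would restrict the characters of $\Delta_p$: every $g\in\mathbb{Z}(p^\infty)=\Delta_p^\wedge$ gives $\omega\mapsto(g,\omega)$, whose restriction to $G_\mathbf{u}$ is continuous since the metric $\rho$ of (\ref{01}) is finer than $d$; this restriction is injective because $G_\mathbf{u}\supseteq\langle\omega_0\rangle$ is dense in $\Delta_p$, and by (\ref{1}) it carries $g$ to the character with value $(g,\omega_0)$, which runs through all $p$-power roots of unity as $g$ runs through $\mathbb{Z}(p^\infty)$. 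It therefore remains to prove the reverse inclusion, namely that $\chi(\omega_0)$ is always a $p$-power root of unity.

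Writing $\chi(\omega_0)=\exp\{2\pi i\theta\}$, continuity of $\chi$ at $0$ gives $\|c_j\theta\|\to 0$ whenever $\omega_0^{c_j}\to 0$ in $G_\mathbf{u}$, and by (\ref{01}) the latter means that the exact power $p^{t_j}$ dividing $c_j$ satisfies $t_j\to\infty$ while $\sup_k\|c_j/p^{n_k+1}\|\to 0$. The gap hypothesis $n_{k+1}-n_k\to\infty$ yields at once that $\omega_0^{p^{n_k+1}}\to 0$ (for $n_{k'}\le n_k$ the term vanishes, and for $k'>k$ it is $1/p^{n_{k'}-n_k}\le 1/p^{n_{k+1}-n_k}\to 0$), whence $\|p^{n_k+1}\theta\|\to 0$; in terms of the base-$p$ expansion of $\theta$ this says that just after each index $n_k$ the digits form a long run of $0$'s or of $(p-1)$'s. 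Should $\theta$ turn out rational, the same sequence immediately forces its denominator to be a power of $p$, so the whole point is to prove that $\theta$ is a $p$-power rational, i.e. that its base-$p$ expansion is eventually constantly $0$ or constantly $p-1$.

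This last step is the main obstacle. I would argue by contraposition: if the expansion of $\theta$ has infinitely many digits different from $0$ and infinitely many different from $p-1$, I would construct a null sequence $\omega_0^{c_j}\to 0$ with $\|c_j\theta\|\not\to 0$. Taking $c_j=p^{t_j}w_j$ with $t_j$ placed well inside a long gap $(n_{k_j},n_{k_j+1})$, so that the least $n_k\ge t_j$ exceeds $t_j$ by an amount tending to $\infty$, guarantees $\sup_k\|c_j/p^{n_k+1}\|\to 0$; one then selects the multiplier $w_j$, of size small compared with $p^{\,n_{k_j+1}-t_j}$, so as to rotate the tail of $p^{t_j}\theta$ to within a fixed distance of $\tfrac12$. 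The delicate point, and the reason the growth $n_{k+1}-n_k\to\infty$ is needed, is that a run of equal digits ending exactly at a boundary $n_{k_j+1}$ leaves no room for such a $w_j$; one must use the length of the windows to locate, inside infinitely many of them, a position $t_j$ whose tail is bounded away from $0$ and $1$. Alternatively, I expect this inclusion to follow from the description in \cite{Ga1} of the dual of a characterized group, identifying $G_\mathbf{u}^\wedge$ algebraically with the subgroup $\langle\mathbf{u}\rangle$ of $\Delta_p^\wedge=\mathbb{Z}(p^\infty)$; since the orders $p^{n_k+1}$ of the $u_k$ tend to $\infty$, one has $\langle\mathbf{u}\rangle=\mathbb{Z}(p^\infty)$, which finishes the proof.
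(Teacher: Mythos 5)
Your overall frame is the paper's: by Lemma~\ref{l12} a character $\chi$ is determined by $(\chi,\omega_0)=\exp\{2\pi i\theta\}$, the inclusion $\mathbb{Z}(p^\infty)\subseteq G_\mathbf{u}^\wedge$ is routine, and everything reduces to proving $\theta\in\mathbb{Z}(p^\infty)$. The genuine gap is in that last step, and it sits exactly where you flag a ``delicate point''; it is not a technicality but a fatal obstruction to your construction. Your test elements are single-window: $c_j=p^{t_j}w_j$ with $t_j$ in a gap and $w_j$ small compared with $p^{\,n_{k_j+1}-t_j}$, so all $p$-adic digits of $c_j$ lie in one gap. Now consider $\theta=\sum_k p^{-(n_k+1)}$, i.e.\ base-$p$ digit $1$ at each position $n_k+1$ and $0$ elsewhere. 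It is irrational (the gaps between nonzero digits grow), so $\theta\notin\mathbb{Z}(p^\infty)$; it satisfies your necessary condition $\|p^{n_k+1}\theta\|\to 0$; and it passes \emph{every} test of your form: writing $n_K$ for the least $n_k\geqslant t_j$, one has $\{p^{t_j}\theta\}\leqslant 2p^{-(n_K+1-t_j)}$, hence $\|c_j\theta\|\leqslant w_j\{p^{t_j}\theta\}\leqslant (2/p)\,w_j/p^{\,n_K-t_j}\to 0$ whenever $\omega_0^{c_j}\to 0$ in your scheme. So no contradiction with continuity can ever be extracted from such elements. The positions whose tails are bounded away from $0$ and $1$ are precisely $t=n_k$ (tail $\approx 1/p$), i.e.\ the window boundaries, where the requirement $w_j/p^{\,n_K-t_j}\to 0$ leaves no integer multiplier at all --- exactly the configuration you hoped to rule out ``using the length of the windows'', but which genuinely occurs.

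What is missing is accumulation over unboundedly many windows, and that is precisely the paper's step 4). Condition (i) there allows test elements whose digits occupy $s$ distinct windows, and step 4) evaluates $\chi$ at the products $\omega_0^{p^{h(1)-1}}\cdots\omega_0^{p^{h(j)-1}}$: each window contributes roughly $b_{l(k_q)}/p^{r_0+6}$ to the argument, and $j$ is increased until the sum of these individually tiny contributions exceeds $\varepsilon$. For the $\theta$ above, a sum of about $p^{r+1}/2$ terms $p^{n_{k_q}-r}$ is $\rho$-close to $0$ (its sup-term is $O(p^{-r})$) while moving $\theta$ by about $1/2$; this is the contradiction that no single-window element can produce. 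Your fallback is also not available: \cite{Ga1} contains no identification $G_\mathbf{u}^\wedge=\langle\mathbf{u}\rangle$; it is cited only for polishability, for $(\mathbb{Z}(p^\infty),\mathbf{u})^\wedge=G_\mathbf{u}$, and for the reflexivity criterion whose hypothesis is exactly the equality this lemma must establish. Indeed, if such a general formula existed, both this lemma and item 2 of the $\bigoplus_n\mathbb{Z}(b_n)$ case (where the paper again computes $G_\mathbf{u}^\wedge=G$ by hand) would be immediate, and the paper's lengthy arguments would be pointless.
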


\begin{proof} Since $\langle\omega_0 \rangle$ is dense in $G_\mathbf{u}$, any continuous character $\chi$ of $G_\mathbf{u}$ is defined by its value on $\omega_0$. Let $(\chi, \omega_0)=\exp \{ 2\pi i \alpha\}$ for some $\alpha \in [0,1)$. It is enough to prove that $\alpha\in \mathbb{Z} (p^\infty)$. Let
\[
\alpha = \sum_{i=1}^\infty \frac{b_i}{p^i}, \mbox{ where } 0\leqslant b_i <p.
\]

Let $0<\varepsilon<\frac{1}{p^2}$. Since $\chi$ is continuous, then there exists $\delta >0$ such that
\begin{equation} \label{5}
|1- (\chi, \omega)| < \varepsilon, \quad \forall \omega\in U_{\delta}.
\end{equation}
Choose $r_0$ and $k_0$ such that $\frac{1}{p^{r_0}} <\frac{\delta}{10}$ and $\frac{1}{2^{n_{k_0}}} <\frac{\delta}{10}$. Let $\omega=(a_n) \in s_{\mathbf{u}} (\Delta_p)$ have the following form
\begin{enumerate}
\item[(i)] There exist $k_0 <k_1<\dots <k_s , s\in \mathbb{N},$ such that $a_n\in [0,p-1]$ if $n\in [n_{k_i}+1, n_{k_i +1} -r_0-1], i=0,\dots,s-1,$ and $a_n =0$ otherwise.
\end{enumerate}
Then, by (\ref{01}), (\ref{1}) and (\ref{2}), we have
\[
\rho(0,\omega)\leqslant \frac{\delta}{10}+ \sup \{ |1-(u_n,\omega)|, \; n\in\mathbb{N} \} \leqslant \frac{\delta}{10} + \frac{2\pi}{p^{r_0}} <\delta,
\]
i.e. $\omega\in U_{\delta}$.

1) {\it Let} $p>2$. Set
\[
R(k) =\{ i: \; n_{k-1} +1<i\leqslant n_k +1 \mbox{ and } 0<b_i < p-1 \}.
\]
If $R(k)$ is not empty, we set $r_k =\min \{ i:\; i\in R(k)\}$.

{\it Let us prove that there exists $C_1 >0$ such that for every $k>k_0$ if $i\in R(k)$, then $0\leqslant n_k +1-i <C_1$. Hence there exists $k'_0 >k_0$ such that for any $k> k'_0$ if $n_{k-1} +1 <i\leqslant n_k +1-C_1$, then either $b_i = p-1$ or $b_i =0$.}

Assume the converse and there exists a subsequence $r_{k_q}$ such that $n_{k_q} -r_{k_q} \to\infty$.
We can assume that  $n_{k_q} -r_{k_q} >r_0$.  Since
\[
\omega_0^{p^{r_{k_q}-1}} =(0,\dots,0_{n_{k_q -1}},\dots, 0, 1_{r_{k_q} },0,\dots)
\]
 satisfies condition (i), then it is contained in $U_{\delta}$. On the other hand,
\[
\left( \chi, \omega_0^{p^{r_{k_q}-1}}\right) =\exp\left\{ 2\pi i \left( \frac{b_{r_{k_q}}}{p} +\sum_{i=1}^\infty \frac{b_{{r_{k_q}} +i}}{p^{i+1}} \right)\right\} \mbox{ and }
\]
\[
\frac{1}{p}\leqslant \frac{b_{r_{k_q}}}{p} \leqslant \frac{b_{r_{k_q}}}{p} +\sum_{i=1}^\infty \frac{b_{{r_{k_q}} +i}}{p^{i+1}} \leqslant \frac{b_{r_{k_q}}+1}{p} <\frac{p-1}{p}.
\]
So $\left| 1- \left( \chi, \omega_0^{p^{r_{k_q}-1}}\right)\right| \geqslant  \frac{\pi}{p} >\varepsilon$. This inequality contradicts to (\ref{5}). Now we can choose $k'_0 >k_0$ such that $n_{k-1} < n_k - C_1, \forall k> k'_0$.

2) Set
\[
T(k) =\{ i: \; n_{k-1} +1<i< n_k +1 \mbox{ such that } b_i =p-1 \mbox{ and }b_{i+1}=0 \}.
\]
If $T(k)$ is not empty, we set $t_k =\min \{ i:\; i\in T(k)\}$.

{\it Let us prove that there exists $C_2 >(C_1)>0$  such that for every $k>k'_0$ if $i\in T(k)$, then $0\leqslant n_k -i <C_2$. Hence there exists $k''_0 >k'_0$ such that for every $k>k''_0$ and $i\in T(k)$ if $b_i >0$, then  $b_i = b_{i+1}=\dots = b_{n_k -C_2} =p-1$.}

Assume the converse and there exists a subsequence $t_{k_q}$ such that $n_{k_q} -t_{k_q} \to\infty$. We can assume that $k_1 >k_0$ and $n_{k_q} -r_{k_q} >r_0$.  Since
\[
\omega_0^{p^{t_{k_q}-1}} =(0,\dots,0_{n_{k_q -1}},\dots, 0, 1_{t_{k_q} },0,\dots)
\]
 satisfies condition (i), then it is contained in $U_{\delta}$. On the other hand,
\[
\left( \chi, \omega_0^{p^{t_{k_q} -1}}\right) =\exp\left\{ 2\pi i \left( \frac{p-1}{p} +\sum_{i=1}^\infty \frac{b_{{t_{k_q}} +i}}{p^{i+2}} \right)\right\} \mbox{ and }
\]
\[
1-\frac{1}{p}\leqslant \frac{p-1}{p}  +\sum_{i=1}^\infty \frac{b_{{t_{k_q}} +i}}{p^{i+2}} \leqslant \frac{p-1}{p} + \frac{p-1}{p^3} \frac{1}{1-1/p} =1- \frac{p-1}{p^2}.
\]
So $\left| 1- \left( \chi, \omega_0^{p^{t_{k_q} -1}}\right)\right| \geqslant  \frac{\pi}{p^2} >\varepsilon$. This inequality contradicts to (\ref{5}). Choose $k''_0 >k'_0$ such that $n_{k-1} < n_k - C_2, \forall k> k''_0$.

3) {\it Let us prove that there exist $C_3 >C_2$ and $k'''_0 >k''_0$ such that for every $k>k'''_0$ either $b_{n_{k-1} +2} =b_{n_{k-1} +3}=\dots =b_{n_{k} -C_3}=p-1$ or $b_{n_{k-1} +2} =b_{n_{k-1} +3}=\dots =b_{n_{k} -C_3}=0$.}

Set $S(k) = \{ i: \; n_{k-1} +1<i< n_k -C_2 \mbox{ and } b_i =p-1  \}$. Denote by $s_k =\min \{ i\in S(k)\}$ if $S(k)\not=\emptyset$ and $s_k = n_k -C_2$ otherwise.
By item 2), it is enough to prove that the sequence
$\{ n_k - s_k, \mbox{ where } k \mbox{ is chosen such that } s_k >n_{k-1} +2 \}$
is bounded (then we can put $C_3$ is the maximum of this sequence and choose $k'''_0 >k''_0$ such that $n_{k-1} +2 < n_k - C_3, \forall k> k'''_0$).

Assume the converse and there exists a subsequence $s_{k_q}$ such that $s_{k_q} >n_{k_q -1} +2$ and $n_{k_q} -s_{k_q} \to\infty$. Then $b_i=0$ for $n_{k_q -1} +1<i<s_{k_q}$ and
\[
\omega_0^{p^{s_{k_q}-2}} =(0,\dots,0_{n_{k-1}},\dots, 0, 1_{s_{k_q} -1},0,\dots)\in U_{\delta} \mbox{ for enough big } q.
\]
 On the other hand,
\[
\left( \chi, \omega_0^{p^{s_{k_q}-2}}\right) =\exp\left\{ 2\pi i \left( \frac{p-1}{p^2} +\sum_{i=1}^\infty \frac{b_{{s_{k_q}} +i}}{p^{i+2}} \right)\right\} \mbox{ and }
\]
\[
\frac{p-1}{p^2}\leqslant \frac{p-1}{p^2}  +\sum_{i=1}^\infty \frac{b_{{s_{k_q}} +i}}{p^{i+2}} \leqslant \frac{p-1}{p^2} + \frac{p-1}{p^3} \frac{1}{1-1/p} =\frac{1}{p}.
\]
So $\left| 1- \left( \chi, \omega_0^{p^{s_{k_q}-2}}\right)\right| \geqslant  \frac{\pi (p-1)}{p^2} >\varepsilon$. This inequality contradicts to (\ref{5}).

4) Set $A=\{ k:\; k>k'''_0 \mbox{ and } b_{n_{k-1} +2} =b_{n_{k-1} +3}=\dots =b_{n_{k} -C_3}=0 \}$. We can assume that $A$ is infinite. Indeed, if $A$ is finite, then
\[
(-\chi , \omega_0 ) =\exp \left\{ 2\pi i \left( \sum_{i=1}^\infty \frac{p-1-b_i}{p^i} \right)\right\}
\]
and we can consider the character $-\chi$ instead of $\chi$.

Denote by $l(k)=\min \{ i:\; \mbox{ where } n_k -C_3 <i \mbox{ and } b_i >0 \}, k>k'''_0$.

Assume that $\alpha\not\in \mathbb{Z} (p^\infty )$. Then, by item 3), there exist $a\geqslant  0$ and a subsequence $k_q$ such that $n_{k_q} +2 - l(k_q) =a$.
Choose $\widetilde{k}> k'''_0$ such that $n_k -n_{k-1}> r_0 +a+3$ for every $k>\widetilde{k}$. Set $h(q) = n_{k_q} -(r_0 +a+3)$. Then $l(k_q) -h(q) =r_0 +5$ and
\[
\omega_0^{p^{ h(q)-1}} =(0,\dots,0_{n_{k_q}-1},\dots, 0, 1_{h(q)} ,0,\dots)
\]
 satisfies condition (i).
Put $w(j)=(h(1)-1) + (h(2)-1)+\dots + (h(j)-1)$. Then $\omega_0^{p^{w(j)}}$ also satisfies condition (i) for every $j$ and, hence, it is contained in $U_{\delta}$. Since
\[
\left( \chi , \omega_0^{p^{ h(q)-1}}\right) =\exp \left\{ 2\pi i \left( \frac{b_{l_{k_q}}}{p^{r_0 +6}} +\sum_{i=1}^\infty \frac{b_i}{p^{r_0 +6+i}} \right)\right\} \mbox{ and}
\]
\[
\frac{b_{l_{k_q}}}{p^{r_0 +6}} <\frac{b_{l_{k_q}}}{p^{r_0 +6}} +\sum_{i=1}^\infty \frac{b_i}{p^{r_0 +6+i}} <\frac{b_{l_{k_q}}+1}{p^{r_0 +6}},
\]
then $({\rm mod}\; 1)$
\[
\frac{1}{p^{r_0 +6}} \sum_{q=1}^j b_{l_{k_q}} \leqslant \frac{1}{2\pi i} {\rm Arg} \left( \chi, \omega_0^{p^{w(j)}}\right) \leqslant \frac{1}{p^{r_0 +6}} \sum_{q=1}^j (b_{l_{k_q}}+1).
\]
It is clear that there exists $j$ such that $\left| 1- \left( \chi, \omega_0^{p^{m(j)}}\right)\right| > \varepsilon$. This inequality contradicts to (\ref{5}). Thus $\alpha \in \mathbb{Z} (p^\infty )$.
\end{proof}

{\it Let us prove Theorem \ref{t1} for the case $\mathbb{Z} (p^{\infty})$}.
By Lemma \ref{l13}, $G^\wedge_\mathbf{u} = \mathbb{Z} (p^\infty)$ algebraically. By Proposition 1 \cite{Ga1}, $G_\mathbf{u}$ is reflexive. So $\mathbb{Z} (p^\infty)$ with the topology of $G^\wedge_\mathbf{u}$ is also reflexive.

\end{proof}

\subsection{The case $G=\bigoplus_n \mathbb{Z} (b_n)$}

\begin{proof} Assume that $G=\bigoplus_n \mathbb{Z} (b_n)$, where $b_1 \leqslant b_2 \leqslant\dots, b_n \to\infty$.

The metric $d$ on $G_d^\wedge =\prod_n \mathbb{Z} (b_n)$ is defined as follows: if $\omega_1 \not= \omega_2 \in G_d^\wedge$, then $d(\omega_1 ,\omega_2) = 2^{-n}$, where  $n$ is the minimal index such that $a_n^1 \not= a_n^2$.

Set $\mathbf{u}=\{ u_n \}$, where $u_n =e_n$ is a generator of $\mathbb{Z} (b_n)$. Then
\[
s_\mathbf{u} (G_d^\wedge) = \left\{ \omega =(a_n) \in G_d^\wedge :\, (u_n , \omega)=\exp \left\{ 2\pi i \frac{a_n}{b_n}\right\} \to 1 \right\}.
\]
So: $\omega\in s_\mathbf{u} (G_d^\wedge)$ if and only if $\left\| \frac{a_n}{b_n} \right\| \to 0$.

Evidently, $G$ is dense in $G_d^\wedge $ and $G\subset s_{\mathbf{u}} (G_d^\wedge)$.
By Theorem 3 \cite{Ga1}, $\mathbf{u}$ is a $T$-sequence and $(G, \mathbf{u})^\wedge = G_\mathbf{u}$, where the Polish group $G_\mathbf{u}$ is $s_{\mathbf{u}} (G_d^\wedge)$ with the Polish group metric $\rho$ [see (\ref{01})].

{\bf 1.} {\it Let us prove that $G$ is dense in $G_\mathbf{u}$.}

Let $\omega =(a_n) \in G_\mathbf{u}$ and $\varepsilon >0$. Choose $n_0$ such that $|1-(u_n,\omega)|<\varepsilon/10$ for all $n\geqslant  n_0$. Choose $m\geqslant  n_0$ such that $d(\omega, \omega_m) <\varepsilon/10$, where $\omega_m = (a_1,\dots, a_m,0,\dots)$. Then $|(u_n, \omega)-(u_n,\omega_m)|=1$ for $n<n_0$ and $\rho (\omega,\omega_m)<\varepsilon/10 +\varepsilon/10 <\varepsilon$. Thus $G$ is dense in $G_\mathbf{u}$.

{\bf 2.} {\it Let us prove that $G_\mathbf{u}^\wedge =G$ algebraically}.

By item 1, the conjugate homomorphism $G_\mathbf{u}^\wedge \to G_d^\wedge$ is a monomorphism.
So any $\chi \in G_\mathbf{u}^\wedge$ we can represent in the form $\chi =(c_n) \in G_d^\wedge, 0\leqslant c_n <b_n$. We need to prove only that $c_n =0$ for all enough big $n$.

Let $\varepsilon >0$. Since $\chi$ is continuous, there exists an integer $M>10$ such that $|1-(\chi,\omega)|<\varepsilon, \forall \omega\in U_{1/M}$. By the definition of the metric $\rho$ on $G_\mathbf{u}$ (\ref{01}),  there exists $k_0$ such that if $\omega\in G_\mathbf{u}$ has the form $\omega = (0,\dots, 0_{n_{k_0} -1}, a_{n_{k_0}}, a_{n_{k_0}+1}, \dots)$ and
\begin{equation} \label{21}
\left| 1- \exp \left\{ 2\pi i \frac{a_n}{b_n} \right\} \right| <\frac{1}{M} \mbox{ for all } n\geqslant  n_{k_0}, \mbox{ then } \omega\in U_{1/M} ,
\end{equation}
and, in particular, $|1- (\chi,\omega)| <\varepsilon$.

Now assume the converse and there exists a sequence $n_1 < n_2<\dots$ such that $c_{n_k} >0$. We can assume that $\frac{c_{n_k}}{b_{n_k}}$ converges to $\lambda \in [0,1]$. There exist three possibilities.

1) $\lambda\in (0,1)$. Then we can assume that $\alpha < \left\| \frac{c_{n_k}}{b_{n_k}}\right\| \leqslant \frac{1}{2}$ for some $\alpha > 0$ and all $k$. Let $\varepsilon <\alpha$. Choose $k\geqslant  k_0$ such that $b_{n_k} >10M$ and set
\[
\omega = (0,\dots,0, 1_{n_k}, 0,\dots).
\]
Since $\left| 1- \exp \left\{ 2\pi i \frac{1}{b_{n_k}} \right\} \right| <\frac{2\pi}{b_{n_k}} <\frac{1}{M}$, then, by (\ref{21}), we have $\omega\in U_{1/M}$. Thus $|1- (\chi,\omega)| <\varepsilon$. On the other hand
\[
|1-(\chi, \omega) |=\left| 1-\exp\left\{ 2\pi i  \frac{c_{n_k}}{b_{n_k}}\right\} \right| \geqslant  \pi \left\| \frac{c_{n_k}}{b_{n_k}} \right\| >\pi\alpha >\varepsilon.
\]
It is a contradiction.

2) $\lambda =0$. Let $\varepsilon <0.01$. Choose $k\geqslant  k_0$ such that
\begin{equation} \label{22}
\frac{c_{n_l}}{b_{n_l}} < \frac{1}{20\pi M^3}, \mbox{ for every } l\geqslant  k.
\end{equation}
Set $a_{n_l} = \left[ \frac{b_{n_l} }{20\pi M c_{n_l}} \right]$ and $\varepsilon_l = \frac{b_{n_l} }{20\pi M c_{n_l}} -a_{n_l} <1$. Then, by (\ref{22}), $a_{n_l}  >M >0$. Put
\[
\omega =(0,\dots,0, a_{n_{k}},0,\dots,0, a_{n_{k+1}},0,\dots, 0,
a_{n_{k +M-1}}, 0,\dots).
\]
It is clear, by (\ref{21}), that $\omega\in U_{1/M}$ and, hence, $|1- (\chi,\omega)| <\varepsilon$. On the other hand, since
\[
\sum_{l=k}^{k+M-1} a_{n_{l}} \cdot \frac{c_{n_{l}}}{b_{n_{l}}} <\sum_{l=k}^{k+M-1} \frac{b_{n_l} }{20\pi M c_{n_l}} \cdot \frac{c_{n_{l}}}{b_{n_{l}}} =M\cdot \frac{1}{20\pi M} =\frac{1}{20\pi},
\]
and
\[
\sum_{l=k}^{k+M-1} a_{n_{l}} \cdot \frac{c_{n_{l}}}{b_{n_{l}}} = \sum_{l=k}^{k+M-1} \left( \frac{b_{n_l} }{20\pi M c_{n_l}} -\varepsilon_l \right) \cdot \frac{c_{n_{l}}}{b_{n_{l}}} =
M\cdot \frac{1}{20\pi M} - \sum_{l=k}^{k+M-1} \varepsilon_l  \cdot \frac{c_{n_{l}}}{b_{n_{l}}}>
\]
\[
(\mbox{by } (\ref{22})) > \frac{1}{20\pi} - \frac{M}{20\pi M^3} =\frac{1}{20\pi} \left( 1-\frac{1}{M^2} \right) > \frac{0.9}{20\pi},
\]
then
\[
|1-(\chi,\omega)| =\left| 1-\exp \left\{ 2\pi i \sum_{l=k}^{k+M-1} a_{n_{l}} \cdot \frac{c_{n_{l}}}{b_{n_{l}}} \right\}\right| >0.04 >\varepsilon .
\]
It is a contradiction.

3) $\lambda =1$. Let $\varepsilon <0.01$. Choose $k\geqslant  k_0$ such that
\begin{equation} \label{23}
\frac{b_{n_l}- c_{n_l}}{b_{n_l}} < \frac{1}{20\pi M^3}, \mbox{ for every } l\geqslant  k.
\end{equation}

Set $a_{n_l} = \left[ \frac{b_{n_l} }{20\pi M (b_{n_l}-c_{n_l})} \right]$ and $\varepsilon_l = \frac{b_{n_l} }{20\pi M (b_{n_l}- c_{n_l})} -a_{n_l} <1$. Then, by (\ref{23}), $a_{n_l} >M >0$. Put
\[
\omega =(0,\dots,0, a_{n_{k}},0,\dots,0, a_{n_{k+1}},0,\dots, 0,
a_{n_{k +M-1}}, 0,\dots).
\]
It is clear, by (\ref{21}), that $\omega\in U_{1/M}$ and, hence, $|1- (\chi,\omega)| <\varepsilon$. On the other hand, since
\[
\sum_{l=k}^{k+M-1} a_{n_{l}} \cdot \frac{c_{n_{l}}}{b_{n_{l}}} ({\rm mod} \, 1 ) = -\sum_{l=k}^{k+M-1} a_{n_{l}} \cdot \frac{b_{n_{l}} - c_{n_{l}}}{b_{n_{l}}},
\]
we can repeat the computations in item 2), and obtain that $\varepsilon <0.04< |1-(\chi,\omega)|$.
It is a contradiction. So $(G,\mathbf{u})^{\wedge\wedge} =G_\mathbf{u}^\wedge =G$ algebraically.

{\bf 3.} {\it Let us prove Theorem \ref{t1} for the case $G=\bigoplus_n \mathbb{Z} (b_n)$}.
By item {\bf 2}, $G^\wedge_\mathbf{u} = G$ algebraically. By Proposition 1 \cite{Ga1}, $G_\mathbf{u}$ is reflexive. So $G$ with the topology of $G^\wedge_\mathbf{u}$ is also reflexive.

\end{proof}

\bibliographystyle{amsplain}

\end{document}